\DeclareMathOperator{\aut}{Aut}
\DeclareMathOperator{\cay}{Cay}
\DeclareMathOperator{\rk}{rk}
\DeclareMathOperator{\Span}{Span}
\DeclareMathOperator{\sym}{Sym}
\DeclareMathOperator{\rad}{rad}
\DeclareMathOperator{\dimwl}{dim_{WL}}
\DeclareMathOperator{\rkwl}{rk_{WL}}
\DeclareMathOperator{\WL}{WL}
\def\r{\mathrm{right}}
\def\@seccntformat#1{\csname the#1\endcsname. } 
\def\@biblabel#1{#1.}
\title{On WL-rank of Deza Cayley graphs}
\author{Dmitry Churikov}
\address{Sobolev Institute of Mathematics, Novosibirsk, Russia}
\address{Novosibirsk State University, Novosibirsk, Russia}
\email{churikovdv@gmail.com}
\author{Grigory Ryabov}
\address{Sobolev Institute of Mathematics, Novosibirsk, Russia}
\address{Novosibirsk State University, Novosibirsk, Russia}
\email{gric2ryabov@gmail.com}
\thanks{The work is supported by the Mathematical Center in Akademgorodok under the agreement No. 075-15-2019-1613 with the Ministry of Science and Higher Education of the Russian Federation.}
\date{}
\newtheorem{prop}{Proposition}[section]
\newtheorem*{theo1}{Theorem 1}
\newtheorem{lemm}[prop]{Lemma}
\theoremstyle{definition}
\newtheorem*{rem}{Remark}
\begin{document}

\vspace{\baselineskip}
\vspace{\baselineskip}

\vspace{\baselineskip}

\vspace{\baselineskip}

\begin{abstract}
The \emph{WL-rank} of a digraph $\Gamma$ is defined to be the rank of the coherent configuration of $\Gamma$. We construct a new infinite family of strictly  Deza Cayley graphs for which the WL-rank is equal to the number of vertices. The graphs from this family are divisible design and integral.
\\
\\
\textbf{Keywords}: WL-rank, Cayley graphs, Deza graphs.
\\
\\
\textbf{MSC}: 05C25, 05C60, 05C75. 
\end{abstract}

\maketitle

\section{Introduction}

Let $V$ be a finite set and $|V|=n$. A \emph{coherent configuration} $\mathcal{X}$ on $V$ can be thought as a special partition of $V\times V$ for which the diagonal of $V\times V$ is a union of classes (see~\cite[Definition~2.1.3]{CP}). The number of classes is called the \emph{rank} of $\mathcal{X}$. Let $\Gamma=(V,E)$ be a digraph with vertex set $V$ and arc set $E$. The \emph{WL-rank} (the \emph{Weisfeiler-Leman rank}) of $\Gamma$ is defined to be the rank of the smallest coherent configuration on the set $V$ for which $E$ is a union of classes. The term ``WL-rank of a digraph'' was introduced in~\cite{BPR}. This term was chosen because the coherent configuration of a digraph can be found using the Weisfeiler-Leman algorithm~\cite{WeisL}. Since the diagonal of $V\times V$ is a union of classes of any coherent configuration on $V$, we conclude that $\rkwl(\Gamma)\geq 2$ unless $|V|=1$. One can verify that $\rkwl(\Gamma)\leq 2$ if and only if $\Gamma$ is complete or empty. On the other hand, obviously, $\rkwl(\Gamma)\leq n^2$. From~\cite[Lemma~2.1 (2)]{BPR} it follows that if $\Gamma$ is vertex-transitive then $\rkwl(\Gamma)\leq n$.

Let $G$ be a finite group, $|G|=n$, and $S$ an identity-free subset of $G$. The \emph{Cayley digraph} $\cay(G,S)$ is defined to be the digraph with vertex set $G$ and arc set $\{(g,sg):~s\in S,~g\in G\}$. If $S$ is inverse-closed then $\cay(G,S)$ is a \emph{Cayley graph}. If $\Gamma$ is a Cayley digraph over $G$ then $\aut(\Gamma)\geq G_{\r}$, where $G_{\r}$ is the subgroup of $\sym(G)$ induced by right multiplications of $G$. This implies that $\Gamma$ is vertex-transitive and hence $\rkwl(\Gamma)\leq n$.

A $k$-regular graph $\Gamma$ is called  \emph{strongly regular} if there exist nonnegative integers $\lambda$ and $\mu$ such that every two adjacent vertices have $\lambda$ common neighbors and every two nonadjacent vertices have $\mu$ common neighbors. The following generalization of the notion of a strongly regular graph was introduced in~\cite{EFHHH} and goes back to~\cite{Deza}. A $k$-regular graph $\Gamma$ on $n$ vertices is called a \emph{Deza} graph if there exist nonnegative integers $\alpha$ and $\beta$ such that any pair of distinct vertices of $\Gamma$ has either $\alpha$ or $\beta$ common neighbors. The numbers $(n,k,\beta,\alpha)$ are called the \emph{parameters} of $\Gamma$. Clearly, if $\alpha>0$ and $\beta>0$ then $\Gamma$ has diameter~$2$. A Deza graph is called a \emph{strictly} Deza graph if it is nonstrongly regular and has diameter~$2$.

The WL-rank of a strongly regular graph is at most~$3$ (see~\cite[Lemma~2.1 (2)]{BPR}). It is a natural question how large the WL-rank of a Deza graph $\Gamma$ can be. In this paper we are interested in the WL-rank of Deza Cayley graphs. The WL-rank of a nonstrictly Deza Cayley graph can be sufficiently large. For example, an undirected cycle on $n$ vertices is a nonstrictly Deza graph of WL-rank~$[\frac{n}{2}]+1$ (see~\cite{BPR}). However, strictly Deza graphs seem close to strongly regular graphs. All known strictly Deza Cayley graphs over cyclic groups have WL-rank at most~$6$~\cite{BPR}. As it was said before, the WL-rank of any Cayley graph does not exceed the number of vertices of this graph. It turns out that there exists an infinite family of strictly Deza Cayley graphs whose WL-rank is equal to the number of vertices. This follows from the theorem below which is the main result of this paper. The cyclic and dihedral groups of order $n$ are denoted by $C_n$ and $D_n$ respectively.

\begin{theo1}\label{main}
Let $k\geq 3$ be an odd integer, $G\cong D_{2k}\times C_2\times C_2$, and $n=|G|$. There exists a strictly Deza Cayley graph $\Gamma$ over $G$ such that $\rkwl(\Gamma)=n$.
\end{theo1}

Note that the graphs from Theorem~\ref{main} are divisible design integral graphs (see Section~$4$).

We finish the introduction with the brief outline of the paper. If $\Gamma=\cay(G,S)$ then the WL-rank of $\Gamma$ is equal to the rank of the smallest $S$-ring over $G$ for which $S$ is a union of basic sets. The necessary background of $S$-rings and Cayley graphs is provided in Section~$2$. In Section~$3$ we construct the required family of strictly Deza Cayley graphs and prove Theorem~\ref{main}. In Section~$4$ we prove that each graph from the constructed family is an integral divisible design graph (Lemma~\ref{l4}), has the same parameters as the grid graph but not isomorphic to it (Lemma~\ref{l5}), and can be identified efficiently (Lemma~\ref{l6}).

The authors would like to thank prof. I.~Ponomarenko for the valuable comments which help us to improve the text significantly.

\section{Preliminaries}
In this section we provide a background of $S$-rings and Cayley graphs. In general, we follow to~\cite{BPR,Ry1,Ry2}, where the most of  definitions and statements is contained.

\subsection{$S$-rings}

Let $G$ be a finite group and $\mathbb{Z}G$  the integer group ring. The identity element of $G$ and the set of all nonidentity elements of $G$ are denoted by~$e$ and~$G^\#$ respectively. If $X\subseteq G$ then the element $\sum \limits_{x\in X} {x}$ of the group ring $\mathbb{Z}G$ is denoted by~$\underline{X}$. An easy straightforward computation implies that $\underline{G}^2=|G|\underline{G}$. The set $\{x^{-1}:x\in X\}$ is denoted by $X^{-1}$.

A subring  $\mathcal{A}\subseteq \mathbb{Z} G$ is called an \emph{$S$-ring} (a \emph{Schur} ring) over $G$ if there exists a partition $\mathcal{S}=\mathcal{S}(\mathcal{A})$ of~$G$ such that:

$(1)$ $\{e\}\in\mathcal{S}$;

$(2)$  if $X\in\mathcal{S}$ then $X^{-1}\in\mathcal{S}$;

$(3)$ $\mathcal{A}=\Span_{\mathbb{Z}}\{\underline{X}:\ X\in\mathcal{S}\}$.

\noindent The notion of an $S$-ring goes back to Schur~\cite{Schur} and Wielandt~\cite{Wi}.

The elements of $\mathcal{S}$ are called the \emph{basic sets} of  $\mathcal{A}$ and the number $\rk(\mathcal{A})=|\mathcal{S}|$ is called the \emph{rank} of~$\mathcal{A}$. The group ring $\mathbb{Z}G$ is an $S$-ring over $G$ corresponding to the partition of $G$ into singletons and $\rk(\mathbb{Z}G)=|G|$.

The following lemma provides a well-known property of $S$-rings (see, e.g.~\cite[Lemma~2.4]{Ry1}).

\begin{lemm}\label{basicset}
Let $\mathcal{A}$ be an $S$-ring over a group $G$. If $X,Y\in \mathcal{S}(\mathcal{A})$ then $XY\in \mathcal{S}(\mathcal{A})$ whenever $|X|=1$ or $|Y|=1$.
\end{lemm}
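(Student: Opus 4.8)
The plan is to prove the two cases uniformly by exhibiting, for each of them, an explicit element of $\cA$ that equals $\underline{XY}$. Recall that $\cA$ is closed under multiplication, so $\underline{X}\cdot\underline{Y}\in\cA$; the content of the lemma is that this product is in fact a single basic set, i.e.\ $\underline{X}\,\underline{Y}=\underline{XY}$ with $XY$ a basic set.

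First I would treat the case $|X|=1$, say $X=\{g\}$. Then $\underline{X}\,\underline{Y}=g\,\underline{Y}=\underline{gY}$, so $\underline{gY}\in\cA$, and $gY$ is a subset of $G$ of size $|Y|$. It remains to argue that $gY$ is a basic set, i.e.\ a class of the partition $\mathcal{S}(\mathcal{A})$. Since $\cA=\Span_{\mZ}\{\underline{Z}:Z\in\mathcal{S}(\mathcal{A})\}$ and the sets $\underline{Z}$ for $Z\in\mathcal{S}(\mathcal{A})$ are linearly independent over $\mZ$ (being supported on disjoint subsets of $G$), every element of $\cA$ is a unique $\mZ$-linear combination of the $\underline{Z}$. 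The element $\underline{gY}$ has all coefficients $0$ or $1$, so the only way it can be written as such a combination is as a sum $\sum_{i}\underline{Z_i}$ over some basic sets $Z_i$ whose union is exactly $gY$. Pick any $h\in Y$; then $gh\in gY$, so $gh$ lies in some basic set $Z\subseteq gY$. Applying the same reasoning with $g^{-1}$ in place of $g$ (note $\{g^{-1}\}$ need not be a basic set, but we only need $\underline{g^{-1}}\,\underline{Z}=\underline{g^{-1}Z}\in\cA$, which follows the same way), we get $g^{-1}Z\subseteq Y$ is covered by basic sets; but $h\in g^{-1}Z$ and $Y$ is itself a single basic set, so $g^{-1}Z=Y$ by the disjointness of the partition, whence $Z=gY$. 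Thus $gY\in\mathcal{S}(\mathcal{A})$.

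For the case $|Y|=1$, say $Y=\{g\}$, the argument is symmetric: $\underline{X}\,\underline{Y}=\underline{X}\,g=\underline{Xg}\in\cA$, and the same linear-independence plus disjointness argument, now using right translation by $g$ and $g^{-1}$, shows $Xg$ is a basic set. Alternatively one can reduce this case to the previous one by applying the standard antiautomorphism $\sum_x c_x x\mapsto \sum_x c_x x^{-1}$ of $\mZ G$, which maps $\cA$ to the $S$-ring with basic sets $\{Z^{-1}:Z\in\mathcal{S}(\mathcal{A})\}$ and swaps the roles of $X$ and $Y^{-1}$.

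The only genuinely delicate point — the ``main obstacle,'' though it is mild — is justifying that an element of $\cA$ with $\{0,1\}$-coefficients must be a sum of \emph{whole} basic sets rather than a more exotic $\mZ$-combination; this is exactly where the linear independence of $\{\underline{Z}:Z\in\mathcal{S}(\mathcal{A})\}$ (equivalently, the fact that distinct basic sets are disjoint, by property $(1)$–$(3)$ of the definition) is used. Everything else is bookkeeping with translations.
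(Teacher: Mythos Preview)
The paper does not give its own proof of this lemma; it is quoted as a well-known fact with a reference to \cite[Lemma~2.4]{Ry1}. Your argument is the standard one and is essentially correct, but there is one slip worth fixing. You write that ``$\{g^{-1}\}$ need not be a basic set'' and that $\underline{g^{-1}Z}\in\mathcal{A}$ ``follows the same way.'' As stated this is not justified: to get $\underline{g^{-1}}\,\underline{Z}\in\mathcal{A}$ from ring closure you need $g^{-1}\in\mathcal{A}$. But in fact axiom~(2) in the definition of an $S$-ring says that $X\in\mathcal{S}(\mathcal{A})$ implies $X^{-1}\in\mathcal{S}(\mathcal{A})$, so $\{g^{-1}\}$ \emph{is} a basic set and $g^{-1}\in\mathcal{A}$. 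Once you invoke axiom~(2) here, the rest of your argument (the $\{0,1\}$-coefficient observation, the translation back by $g^{-1}$, and the disjointness of basic sets forcing $g^{-1}Z=Y$) is clean and complete; the symmetric case and the antiautomorphism reduction are also fine.
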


\begin{lemm}\label{groupring}
Let $\mathcal{A}$ be an $S$-ring over a group $G$ and $X\subseteq G$ such that $\langle X \rangle=G$. Suppose that $\{x\}\in \mathcal{S}(\mathcal{A})$ for every $x\in X$. Then $\mathcal{A}=\mathbb{Z}G$.
\end{lemm}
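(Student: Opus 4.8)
The plan is to show that \emph{every} singleton subset $\{g\}$ of $G$ is a basic set of $\mathcal{A}$. Once this is established, the partition $\mathcal{S}(\mathcal{A})$ consists precisely of the singletons of $G$, and then property~(3) in the definition of an $S$-ring forces $\mathcal{A}=\Span_{\mathbb{Z}}\{g:\ g\in G\}=\mathbb{Z}G$.

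First I would note that the hypothesis already gives many singleton basic sets: $\{e\}\in\mathcal{S}(\mathcal{A})$ by axiom~(1), and for every $x\in X$ both $\{x\}$ and $\{x^{-1}\}=\{x\}^{-1}$ lie in $\mathcal{S}(\mathcal{A})$ by the hypothesis together with axiom~(2). Thus every element of the set $X\cup X^{-1}\cup\{e\}$, which generates $G$, determines a singleton basic set.

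Next, take an arbitrary $g\in G$. Since $\langle X\rangle=G$, we may write $g=x_1^{\varepsilon_1}\cdots x_m^{\varepsilon_m}$ with $x_i\in X$ and $\varepsilon_i\in\{1,-1\}$, where $m=0$ corresponds to $g=e$. I would then argue by induction on $m$. The case $m=0$ is handled by $\{e\}\in\mathcal{S}(\mathcal{A})$. For the inductive step, suppose $\{x_1^{\varepsilon_1}\cdots x_{m-1}^{\varepsilon_{m-1}}\}\in\mathcal{S}(\mathcal{A})$; since $\{x_m^{\varepsilon_m}\}\in\mathcal{S}(\mathcal{A})$ is a singleton, Lemma~\ref{basicset} applies and yields that the product $\{x_1^{\varepsilon_1}\cdots x_{m-1}^{\varepsilon_{m-1}}\}\cdot\{x_m^{\varepsilon_m}\}=\{x_1^{\varepsilon_1}\cdots x_m^{\varepsilon_m}\}=\{g\}$ is again a basic set. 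Hence $\{g\}\in\mathcal{S}(\mathcal{A})$ for all $g\in G$, completing the proof.

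There is essentially no serious obstacle in this argument; it is a routine induction. The only points to keep track of are that Lemma~\ref{basicset} is applicable only because one factor in each product is a singleton — which is precisely the configuration arising in the induction step — and that the identity element is supplied by axiom~(1) rather than by the generating set $X$, so one should not tacitly assume $e\in X$.
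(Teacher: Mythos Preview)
Your argument is correct and is essentially identical to the paper's own proof: both establish $\{g\}\in\mathcal{S}(\mathcal{A})$ for all $g\in G$ by writing $g$ as a word in $X\cup X^{-1}$ and inducting on the word length, invoking Lemma~\ref{basicset} at each step. The only cosmetic difference is that you start the induction at $m=0$ (the identity, via axiom~(1)) whereas the paper starts at length~$1$.
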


\begin{proof}
Let us prove that $\{g\}\in \mathcal{S}(\mathcal{A})$ for every $g\in G$. Since $\langle X \rangle=G$, there exist $x_1,\ldots,x_k\in X$ and $\varepsilon_1,\ldots \varepsilon_k\in\{-1,1\}$ such that $g=x_1^{\varepsilon_1}\ldots x_k^{\varepsilon_k}$. We proceed by induction on $k$. Let $k=1$. If $\varepsilon_1=1$ then $\{g\}\in \mathcal{S}(\mathcal{A})$ by the assumption of the lemma; if $\varepsilon_1=-1$ then $\{g\}\in \mathcal{S}(\mathcal{A})$ by the assumption of the lemma and the second property from the definition of an $S$-ring. Now let $k\geq 2$. By the induction hypothesis, we have $\{x_1^{\varepsilon_1}\ldots x_{k-1}^{\varepsilon_{k-1}}\}\in \mathcal{S}(\mathcal{A})$ and $\{x_k^{\varepsilon_k}\}\in \mathcal{S}(\mathcal{A})$. So $\{g\}=\{x_1^{\varepsilon_1}\ldots x_{k-1}^{\varepsilon_{k-1}}\}\{x_k^{\varepsilon_k}\}\in \mathcal{S}(\mathcal{A})$ by Lemma~\ref{basicset}.
\end{proof}

A set $X \subseteq G$ is called an \emph{$\mathcal{A}$-set} if $\underline{X}\in \mathcal{A}$ or, equivalently, $X$ is a union of some basic sets of $\mathcal{A}$. The set of all $\mathcal{A}$-sets is denoted by $\mathcal{S}^*(\mathcal{A})$. Obviously, if $X\in \mathcal{S}^*(\mathcal{A})$ and $|X|=1$ then $X\in \mathcal{S}(\mathcal{A})$. It is easy to check that if $X,Y\in \mathcal{S}^*(\mathcal{A})$ then 
$$X\cap Y,X\cup Y, X\setminus Y, Y\setminus X, XY\in \mathcal{S}^*(\mathcal{A}).~\eqno(1)$$ 
A subgroup $H \leq G$ is called an \emph{$\mathcal{A}$-subgroup} if $H\in \mathcal{S}^*(\mathcal{A})$. For every $\mathcal{A}$-set $X$, the groups $\langle X \rangle$ and $\rad(X)=\{g\in G:~Xg=gX=X\}$ are $\mathcal{A}$-subgroups. 

\begin{lemm}\cite[Proposition~22.1]{Wi}\label{sw}
Let $\mathcal{A}$ be an $S$-ring over $G$, $\xi=\sum \limits_{g\in G} c_g g\in \mathcal{A}$, where $c_g\in \mathbb{Z}$, and $c\in \mathbb{Z}$. Then $\{g\in G:~c_g=c\}\in \mathcal{S}^*(\mathcal{A})$.
\end{lemm}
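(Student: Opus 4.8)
The plan is to exploit directly the defining property $(3)$ that $\mathcal{A}$ is the $\mathbb{Z}$-span of the basic-set indicators $\underline{X}$, $X\in\mathcal{S}(\mathcal{A})$. Since $\xi\in\mathcal{A}$, I would first write $\xi=\sum_{X\in\mathcal{S}(\mathcal{A})}a_X\underline{X}$ for suitable integers $a_X$. The one point worth stating explicitly is that the $a_X$ are well defined: because the supports of the $\underline{X}$ are pairwise disjoint and cover $G$ (the basic sets partition $G$), the family $\{\underline{X}:X\in\mathcal{S}(\mathcal{A})\}$ is $\mathbb{Z}$-linearly independent, so this expansion is unique and $a_X$ may legitimately be read off as the common coefficient attached to the elements of $X$.

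The key observation is that the coefficient function $g\mapsto c_g$ is constant on each basic set. Indeed, each $g\in G$ lies in exactly one $X\in\mathcal{S}(\mathcal{A})$, and in the expansion above $g$ receives its coefficient solely from the summand $a_X\underline{X}$ with $g\in X$; hence $c_g=a_X$ whenever $g\in X$, so $c_g$ depends only on the basic set containing $g$.

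It then follows immediately that for any fixed $c\in\mathbb{Z}$,
$$\{g\in G:~c_g=c\}=\bigcup_{\substack{X\in\mathcal{S}(\mathcal{A})\\ a_X=c}}X,$$
which is a union of basic sets of $\mathcal{A}$. By the definition of an $\mathcal{A}$-set this union belongs to $\mathcal{S}^*(\mathcal{A})$, which is exactly the assertion. I do not anticipate any genuine obstacle here: the statement is a direct unwinding of the definitions, resting entirely on the partition structure of an $S$-ring and on the uniqueness of the basic-set expansion of $\xi$.
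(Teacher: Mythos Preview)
Your argument is correct and is precisely the standard proof of this fact (often called the Schur--Wielandt principle): write $\xi$ uniquely in the basis $\{\underline{X}:X\in\mathcal{S}(\mathcal{A})\}$, observe that the coefficient function $g\mapsto c_g$ is constant on each basic set, and conclude that every level set is a union of basic sets. There is no gap.

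As for comparison with the paper: the paper does not give its own proof of this lemma at all; it merely cites \cite[Proposition~22.1]{Wi} and uses the result as a black box. Your write-up thus supplies what the paper omits, and the approach you take is exactly the classical one found in Wielandt's book.
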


Let $L \unlhd U\leq G$. A section $U/L$ is called an \emph{$\mathcal{A}$-section} if $U$ and $L$ are $\mathcal{A}$-subgroups. If $S=U/L$ is an $\mathcal{A}$-section then the module
$$\mathcal{A}_S=Span_{\mathbb{Z}}\left\{\underline{X}^{\pi}:~X\in\mathcal{S}(\mathcal{A}),~X\subseteq U\right\},$$
where $\pi:U\rightarrow U/L$ is the canonical epimorphism, is an $S$-ring over $S$.

Let $S=U/L$ be an $\mathcal{A}$-section of $G$. The $S$-ring~$\mathcal{A}$ is called the \emph{$S$-wreath product} or \emph{generalized wreath product} of $\mathcal{A}_U$ and $\mathcal{A}_{G/L}$ if $L\trianglelefteq G$ and $L\leq\rad(X)$ for each basic set $X$ outside~$U$. In this case we write $\mathcal{A}=\mathcal{A}_U\wr_{S}\mathcal{A}_{G/L}$. If $L>\{e\}$ and $U<G$ then the $S$-wreath product is called \emph{nontrivial}. The notion of the generalized wreath product of $S$-rings was introduced in~\cite{EP1}. Since $L\leq\rad(X)$ for each basic set $X$ outside~$U$, the basic sets of $\mathcal{A}$ outside $U$ are in one-to-one correspondence with the basic sets of $\mathcal{A}_{G/L}$ outside $S$. Therefore
$$\rk(\mathcal{A}_U\wr_{S}\mathcal{A}_{G/L})=\rk(\mathcal{A}_U)+\rk(\mathcal{A}_{G/L})-\rk(\mathcal{A}_S).~\eqno(2)$$

The \emph{automorphism group} $\aut(\mathcal{A})$ of $\mathcal{A}$ is defined to be the group 
$$\bigcap \limits_{X\in \mathcal{S}(\mathcal{A})} \aut(\cay(G,X)).$$
Since $\aut(\cay(G,X))\geq G_{\r}$ for every $X\in \mathcal{S}(\mathcal{A})$, we conclude that $\aut(\mathcal{A})\geq G_{\r}$. It is easy to check that $\aut(\mathcal{A})=G_{\r}$ if and only if $\mathcal{A}=\mathbb{Z}G$.

\subsection{Cayley graphs}

Let $S\subseteq G$, $e\notin S$, and $\Gamma=\cay(G,S)$. The \emph{WL-closure} $\WL(\Gamma)$ of $\Gamma$ can be thought as the smallest $S$-ring over $G$ such that $S\in\mathcal{S}^*(\mathcal{A})$ (see~\cite[Section~5]{BPR}). If $\mathcal{A}=\WL(\Gamma)$ then $\rkwl(\Gamma)=\rk(\mathcal{A})$ by~\cite[Lemma~5.1]{BPR}. From~\cite[Theorem~2.6.4]{CP} it follows that $\aut(\Gamma)=\aut(\mathcal{A})$.

\begin{lemm}\cite[Lemma~5.2]{BPR}\label{deza}
Let $G$ be a group of order~$n$, $S\subseteq G$ such that $e\notin S$, $S=S^{-1}$, and $|S|=k$, and $\Gamma=\cay(G,S)$. The graph $\Gamma$ is a Deza graph with parameters $(n,k,\beta,\alpha)$ if and only if $\underline{S}^2=ke+\alpha\underline{X_{\alpha}}+\beta\underline{X_{\beta}}$, where $X_{\alpha}\cup X_{\beta}=G^\#$ and $X_{\alpha}\cap X_{\beta}=\varnothing$. Moreover, $\Gamma$ is strongly regular if and only if $X_{\alpha}=S$ or $X_{\beta}=S$.
\end{lemm}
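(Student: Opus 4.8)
The plan is to compute the group-ring square $\underline{S}^2$ coefficientwise and to recognise its coefficients as numbers of common neighbours in $\Gamma$. Write $\underline{S}^2=\sum_{g\in G}c_g\,g$, where $c_g=|\{(u,v)\in S\times S:\ uv=g\}|\in\mathbb{Z}_{\geq 0}$; this single combinatorial reading of the coefficients is what everything rests on.

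First I would record the adjacency rule. By definition the out-neighbourhood of a vertex $a$ in $\Gamma=\cay(G,S)$ is $Sa$, and since $S=S^{-1}$ the digraph is an undirected graph in which $a$ and $b$ are adjacent exactly when $ba^{-1}\in S$. Hence the common neighbours of two vertices $a,b$ form the set $Sa\cap Sb$, whose size equals the number of pairs $(s_1,s_2)\in S\times S$ with $s_1a=s_2b$, i.e. with $s_2^{-1}s_1=ba^{-1}$. Substituting $u=s_2^{-1}$, $v=s_1$ and using $S=S^{-1}$ to see that $(s_1,s_2)\mapsto(u,v)$ is a bijection of $S\times S$ onto itself, I obtain that the number of common neighbours of $a$ and $b$ is exactly $c_{ba^{-1}}$. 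Taking $a=b$ gives $c_e=|S|=k$, reflecting the $k$-regularity of $\Gamma$; taking $a\neq b$ shows that, as $(a,b)$ runs over ordered pairs of distinct vertices, the attained numbers of common neighbours are precisely the coefficients $\{c_g:\ g\in G^\#\}$, each value $c_g$ being realised by the $n$ pairs with $ba^{-1}=g$.

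With this dictionary the first equivalence is immediate. Put $X_\alpha=\{g\in G^\#:\ c_g=\alpha\}$ and $X_\beta=\{g\in G^\#:\ c_g=\beta\}$. If $\Gamma$ is Deza with parameters $(n,k,\beta,\alpha)$ then every $g\in G^\#$ satisfies $c_g\in\{\alpha,\beta\}$, so $X_\alpha\cup X_\beta=G^\#$ and (when $\alpha\neq\beta$) $X_\alpha\cap X_\beta=\varnothing$; splitting off the term $c_e\,e=ke$ then yields $\underline{S}^2=ke+\alpha\underline{X_\alpha}+\beta\underline{X_\beta}$. Conversely, reading off this identity shows $c_g\in\{\alpha,\beta\}$ for all $g\in G^\#$, so every pair of distinct vertices has $\alpha$ or $\beta$ common neighbours and $\Gamma$ is Deza. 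For the ``moreover'' part I would again invoke that adjacency of $a,b$ is governed by $ba^{-1}\in S$: strong regularity is exactly the statement that $g\mapsto c_g$ is constant on $S$ and constant on $G^\#\setminus S$. Since $c_g$ takes only the values $\alpha,\beta$, the nondegenerate way for this to hold is that one fibre equals $S$ and the other equals $G^\#\setminus S$, i.e. $X_\alpha=S$ or $X_\beta=S$; conversely either equality makes $c_g$ constant on adjacent pairs and constant on nonadjacent pairs, so $\Gamma$ is strongly regular.

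The computation itself is routine; the delicate points are the bookkeeping with left cosets $Sa$ versus the element $ba^{-1}$, and the repeated use of $S=S^{-1}$ to rewrite $s_2^{-1}s_1=ba^{-1}$ as the coefficient $c_{ba^{-1}}$, where a sign or side error would silently break the identification. A secondary point of care is the degenerate case $\alpha=\beta$ in the ``moreover'' direction, in which the partition $\{X_\alpha,X_\beta\}$ is no longer forced and must be chosen so that $X_\alpha=S$. I expect the coset-and-inverse bookkeeping to be the only genuinely error-prone step.
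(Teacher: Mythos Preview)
Your argument is correct and is the standard one: read the coefficient of $g$ in $\underline{S}^2$ as the number of factorisations $g=uv$ with $u,v\in S$, identify this via $S=S^{-1}$ with $|Sa\cap Sb|$ for any pair with $ba^{-1}=g$, and then translate the Deza and strongly regular conditions into statements about the level sets of $g\mapsto c_g$ on $G^\#$. Note that the paper does not give its own proof of this lemma; it is quoted from \cite[Lemma~5.2]{BPR}, and your write-up is essentially the expected proof that would appear there.
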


\section{Proof of Theorem~1}

Let $k\geq 3$ be an integer, $G=(\langle a \rangle \rtimes \langle b \rangle)\times \langle c \rangle \times \langle d \rangle$, where $|a|=k$, $|b|=|c|=|d|=2$, and $bab=a^{-1}$, and $n=|G|$. The groups $\langle a \rangle$, $\langle c\rangle$, and $\langle a \rangle \rtimes \langle b \rangle$ are denoted by $A$, $C$, and $H$ respectively. Clearly, $H\cong D_{2k}$, $G\cong D_{2k}\times C_2 \times C_2$, $|H|=2k$, and $|G|=8k$. Put 
$$S=b(A\setminus \{a^{-1}\})\cup c(A\cup\{b\})\cup \{db,dcba^{-1}\}.$$
One can see that $S=S^{-1}$ and $|S|=2(k+1)$. Put $\Gamma=\cay(G,S)$. Note that $\Gamma$ is $2(k+1)$-regular.

\begin{lemm}\label{l1}
In the above notations, the graph $\Gamma$ is a strictly Deza graph with parameters $(8k,2(k+1),2(k-1),2)$.
\end{lemm}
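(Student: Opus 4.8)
The plan is to verify the defining identity from Lemma~\ref{deza} by direct computation, namely to expand $\underline{S}^2$ in $\mathbb{Z}G$ and show it has the form $2(k+1)e + 2\underline{X_2} + 2(k-1)\underline{X_{2(k-1)}}$ with $X_2 \cup X_{2(k-1)} = G^\#$ a nontrivial partition, and $X_2 \neq S \neq X_{2(k-1)}$ so that $\Gamma$ is strictly Deza. Write $S = S_1 \cup S_2 \cup S_3$ with $S_1 = b(A \setminus \{a^{-1}\})$, $S_2 = c(A \cup \{b\})$, and $S_3 = \{db, dcba^{-1}\}$, and expand $\underline{S}^2 = \sum_{i,j} \underline{S_i}\,\underline{S_j}$ into nine pieces. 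The coset structure is the key bookkeeping device: each $S_i$ lies in a single coset of the subgroup $\langle A, b\rangle = H$ relative to the direct factors $C$ and $\langle d\rangle$, so each product $\underline{S_i}\,\underline{S_j}$ is supported in a prescribed coset of $\langle c,d\rangle \times A$-type and the nine products distribute over the four cosets $H$, $cH$, $dH$, $cdH$ (times the appropriate $A$ or $bA$ part). This lets me compute the coefficient of each group element coset-by-coset rather than all at once.

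Concretely, I would first record the elementary sub-products inside $H \cong D_{2k}$: for subsets of the reflection coset $bA$ one uses $(b a^i)(b a^j) = a^{j-i}$, and products of $bA$-elements with $A$-elements land back in $bA$; the identity $\underline{A}^2 = k\underline{A}$ and $\underline{bA}\cdot\underline{bA} = k\underline{A}$ are the workhorses, together with the fact that $A \setminus \{a^{-1}\}$ differs from $A$ by a single element so its square is easily expressed via $\underline{A}$ minus correction terms. Then I would assemble, for each of the four $\langle c,d\rangle$-cosets, the total multiset of group elements obtained, collect coefficients, and check that only the values $2$ and $2(k-1)$ (and $2(k+1)$ on $e$) appear — the odd-$k$ hypothesis should be exactly what makes certain near-coincidences (e.g. $a^i$ versus $a^{-i}$, or $\pm$ parity of reflection products) resolve into these two clean values. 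Finally I would exhibit one vertex pair with $2$ common neighbors and one with $2(k-1)$ inside $S$ itself (or note $|X_2|, |X_{2(k-1)}| > 0$ and $S$ is not equal to either class by a cardinality or membership check, e.g. $db \in S$ but an explicit computation places $db$ in the ``wrong'' class), which gives strictness via the ``moreover'' clause of Lemma~\ref{deza}; the diameter-$2$ condition is automatic once $\alpha = 2 > 0$ and $\beta = 2(k-1) > 0$.

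The main obstacle will be the cross-terms involving $S_3 = \{db, dcba^{-1}\}$: these two elements sit in the cosets $dH$ and $cdH$ and are each of size one, so by Lemma~\ref{basicset}-type reasoning the products $\underline{S_3}\,\underline{S_j}$ are translates of the $\underline{S_j}$ and contribute single ``streaks'' of coefficient $1$ that must interleave correctly with the bulkier contributions of $\underline{S_1}\,\underline{S_2}$ and its transpose in the $d$- and $cd$-cosets; getting the coefficient $2$ rather than $1$ or $3$ on the relevant elements is where an off-by-one in the reflection arithmetic would show up. I expect the $H$-coset (terms $\underline{S_1}^2 + \underline{S_1}\,\underline{S_2}$-diagonal parts, i.e. those landing in $H$) and the $cH$-coset to be comparatively routine since they are dominated by $\underline{A}$-multiples, and the genuine care is needed in the two cosets touched by $d$. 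Throughout, the computation is finite and mechanical; the only conceptual input is the coset decomposition plus the odd-$k$ parity, so I would organize the write-up as a single displayed expansion of $\underline{S}^2$ followed by four short coset-wise coefficient tallies.
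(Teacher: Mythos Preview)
Your approach is essentially the paper's: compute $\underline{S}^2$ directly in $\mathbb{Z}G$ and invoke Lemma~\ref{deza}. The paper streamlines your nine-term expansion by writing $\underline{S} = b\underline{A} - ba^{-1} + c\underline{A} + cb + db + dcba^{-1}$ (so your $\underline{S_1}$ becomes $b\underline{A}$ minus a single correction rather than a sum over $k-1$ reflections), after which squaring collapses in one line via $\underline{A}^2 = k\underline{A}$, $b\underline{A}=\underline{A}b$, and centrality of $c,d$, yielding $X_{2(k-1)}=A^\#\cup cbA$ and $X_2=bA\cup cA\cup d(C\times H)$.

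One correction worth flagging: this lemma is stated and proved for \emph{all} integers $k\geq 3$, with no parity hypothesis --- the coefficients $2$ and $2(k-1)$ fall out cleanly regardless, and the odd-$k$ assumption enters only later (Lemma~\ref{l2}) in the WL-closure computation. So the ``near-coincidences resolved by parity'' you anticipate in the $d$- and $cd$-cosets will not materialize; the cross-terms with $S_3$ simply contribute the uniform coefficient~$2$ on all of $dC H$ without any case split.
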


\begin{proof}
The straightforward computation in the group ring $\mathbb{Z}G$ using the equalities $\underline{A}^2=k\underline{A}$, $b\underline{A}=\underline{A}b$, $bab=a^{-1}$, $cg=gc$, and $dg=gd$, where $g\in G$, implies that
$$\underline{S}^2=2(k+1)e+2(k-1)(\underline{A}^\#+cb\underline{A})+2(b+c)\underline{A}+2d\underline{C}\underline{H}.~\eqno(3)$$
Indeed,
$$\underline{S}^2=(b\underline{A}+c\underline{A}-ba^{-1}+cb+db+dcba^{-1})^2=$$
$$=4e+(2(k-1)e+2(k-1)cb+2b+2c+2d+2dc+2db+2dcb)\underline{A}=$$
$$=2(k+1)e+2(k-1)(\underline{A}^\#+cb\underline{A})+2(b+c)\underline{A}+2d\underline{C}\underline{H}.$$

From Lemma~\ref{deza} and Eq.~(3) it follows that $\Gamma$ is a nonstrongly regular Deza graph with parameters $(8k,2(k+1),2(k-1),2)$, $X_{2(k-1)}=A^\#\cup cbA$, and $X_2=bA\cup cA\cup d(C\times H)$. This means that $\Gamma$ is a strictly Deza graph.
\end{proof}

All Deza Cayley graphs with at most~$60$ vertices, including the graphs from the constructed family for $k\leq 7$, were enumerated in~\cite{GSh}.

Put $A_1=\langle a^2 \rangle$. If $k$ is odd then $A_1=A$; if $k$ is even then $|A:A_1|=2$. The group $A_1$ is normal in $G$. So one can form the group $L=A_1\rtimes \langle cb \rangle$ which is isomorphic to $D_{2k}$ if $k$ is odd and to $D_k$ if $k$ is even. It can be verified in a straightforward way that $L$ is normal in $G$. Put $U=\langle L,ca,da\rangle$ and $S=U/L$. Since $L\cap \langle ca \rangle=L\cap \langle da \rangle=A_1$, we obtain $|U:L|=4$.

\begin{lemm}\label{l2}
In the above notations, $\WL(\Gamma)=\mathbb{Z}G$ if $k$ is odd and $\WL(\Gamma)=\mathbb{Z}U\wr_S \mathbb{Z}(G/L)$ if $k$ is even.
\end{lemm}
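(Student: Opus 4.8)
The plan is to compute the WL-closure $\mathcal{A} = \WL(\Gamma)$ by tracking down which singletons and small sets must be $\mathcal{A}$-sets, using that $S \in \mathcal{S}^*(\mathcal{A})$ together with the closure properties in Eq.~(1) and Lemma~\ref{sw}. Write $S = S_1 \cup S_2 \cup S_3$ with $S_1 = b(A\setminus\{a^{-1}\})$, $S_2 = c(A\cup\{b\})$, $S_3 = \{db, dcba^{-1}\}$. First I would extract structural $\mathcal{A}$-sets from the decomposition of $\underline{S}^2$ in Eq.~(3): by Lemma~\ref{sw} applied to $\underline{S}^2 \in \mathcal{A}$, the sets $A^\#\cup cbA$, $bA\cup cA$, and $d(C\times H)$ (the supports of the coefficients $2(k-1)$, $2$, and the last term) are $\mathcal{A}$-sets, and hence so are the subgroups they generate; in particular $\langle A^\#\cup cbA\rangle$ and the coset structure it carries give us $L$ (or a group close to it) as an $\mathcal{A}$-subgroup, and $d(C\times H)$ being an $\mathcal{A}$-set forces the coset $dH' $ into the picture. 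Intersecting $S$ with these $\mathcal{A}$-sets and with their translates (via Lemma~\ref{basicset} once we have any singleton) lets us peel $S$ apart into $S_1$, $S_2$, $S_3$ individually as $\mathcal{A}$-sets.

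Next, the two-element set $S_3 = \{db, dcba^{-1}\}$ is the key lever. Computing $\underline{S_3}^{\,2} = 2e + db\,db\cdot(\text{cross terms})$ — concretely $\underline{S_3}^{\,2} = 2e + (dbdcba^{-1} + dcba^{-1}db)$, and since $d,c$ are central and $b^2=e$ this is $2e + 2\,ca^{-2} \cdot(\dots)$ — produces a new singleton or a small $\mathcal{A}$-set from which, after multiplying by elements already known to be singletons, one isolates $\{db\}$ and $\{dcba^{-1}\}$ as basic sets. Similarly, products and intersections involving $S_2 = c(A\cup\{b\})$ and $S_1 = bA\setminus\{ba^{-1}\}$ yield the singletons $\{c\}$, $\{cb\}$, $\{b\}$, etc. The main obstacle — and where the odd/even dichotomy enters — is exactly this: when $k$ is odd, $A_1 = A$, so once we have a single nonidentity power of $a$ as an $\mathcal{A}$-singleton (which falls out of intersecting $bA$-type sets with their translates, or from $\rad$ computations), Lemma~\ref{groupring} applied to a generating set of singletons gives $\mathcal{A} = \mathbb{Z}G$ immediately. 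When $k$ is even, the set $A_1 = \langle a^2\rangle$ is a proper subgroup, the element $a$ itself never gets separated from $a^{-1}$ inside the basic sets, and the best one can prove is that all singletons of $U$ are $\mathcal{A}$-sets while the basic sets outside $U$ all have $L$ in their radical (because the "coarse" structure $d(C\times H)$, $bA$, $cA$ never refines below the $L$-coset level outside $U$); this is precisely the statement that $\mathcal{A} = \mathbb{Z}U \wr_S \mathbb{Z}(G/L)$.

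So the write-up has two halves. For $k$ odd: show step by step that $\{b\}, \{c\}, \{d\}, \{a\}$ (or enough conjugates/products to generate $G$) lie in $\mathcal{S}(\mathcal{A})$ — using Eq.~(1), Lemma~\ref{basicset}, Lemma~\ref{sw} on $\underline{S}^2$, and squaring the two-element piece $S_3$ — and conclude via Lemma~\ref{groupring}. For $k$ even: first prove $U$ and $L$ are $\mathcal{A}$-subgroups (they come out of the $\underline{S}^2$ coefficient supports and the subgroups/radicals they generate), then show $\mathcal{A}_U = \mathbb{Z}U$ by the same singleton-chasing argument restricted to $U$ (all of $ca, da, cb$, and $a^2$ become singletons, and these generate $U$), then show every basic set meeting $G\setminus U$ is $L$-invariant on both sides by checking that the only $\mathcal{A}$-sets outside $U$ available from the computation — essentially pieces of $bA$, $cA$, $d(C\times H)$ and their products — are unions of $L$-cosets, so $L \le \rad(X)$ for each such $X$; this is the definition of the generalized wreath product, and Eq.~(2) is not even needed for the statement, only the structural claim. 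I expect the even case, specifically verifying that no further refinement below the $L$-coset level is forced outside $U$, to be the most delicate point, since it requires ruling out that some iterated product or intersection accidentally separates an $L$-coset.
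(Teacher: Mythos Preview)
Your high-level plan (apply Lemma~\ref{sw} to $\underline{S}^2$, extract singletons, invoke Lemma~\ref{groupring}) is the same as the paper's, but two concrete points need correcting.

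First, the coefficient bookkeeping is off: in Eq.~(3) the elements of $bA\cup cA$ and of $d(C\times H)$ \emph{all} carry the same coefficient~$2$, so Lemma~\ref{sw} only gives you the single set $bA\cup cA\cup d(C\times H)$, not the three pieces separately. Consequently your proposed route through $\underline{S_3}^{\,2}$ stalls: even once you isolate $S_3=\{db,dcba^{-1}\}$ (which takes more work than you indicate), squaring it yields $2e+ca+ca^{-1}$, a symmetric pair rather than a singleton. The paper instead uses the asymmetry between left and right translation by the singleton $\{cb\}=S\cap(A^{\#}\cup cbA)$: one checks directly that $cbS\setminus Scb=\{ca,da^{-1}\}$, whence $(cbS\setminus Scb)\cap S=\{ca\}$ and its complement is $\{da^{-1}\}$. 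From $\{ca\}^2=\{a^2\}$ and Lemma~\ref{groupring} the odd case follows at once, since $\langle a^2,cb,ca,da^{-1}\rangle=G$ when $k$ is odd.

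Second, and more seriously, your plan for the even case has a genuine gap. You propose to prove directly that every basic set of $\mathcal{A}=\WL(\Gamma)$ outside $U$ has $L$ in its radical, and you correctly flag this as ``the most delicate point'' since it amounts to ruling out all possible refinements. The paper sidesteps this difficulty entirely: it writes down the candidate $S$-ring $\mathcal{B}=\mathbb{Z}U\wr_S\mathbb{Z}(G/L)$ explicitly (basic sets: the singletons in $U$ and the four $L$-cosets $La,Lc,Ld,Lcda$ outside $U$), observes that $S=Lc\cup S_U$ with $S_U\subseteq U$, so that $S\in\mathcal{S}^*(\mathcal{B})$, and concludes $\mathcal{A}\le\mathcal{B}$ by minimality of the WL-closure. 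The reverse inclusion $\mathcal{B}\le\mathcal{A}$ is then a finite check: the singletons $\{cb\},\{ca\},\{da^{-1}\},\{a^2\}$ already found generate $U$, so $\mathcal{A}_U=\mathbb{Z}U$, and the four $L$-cosets outside $U$ are obtained from $Lc=S\setminus U$ by multiplying by those singletons. You should adopt this two-sided containment argument rather than attempt to bound refinement from below.
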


\begin{proof}
Let $\mathcal{A}=\WL(\Gamma)$. Put $V=A^\#\cup cbA$. From Eq.~(3) it follows that every element of $V$ enters the element $\underline{S}^2$ with coefficient~$2(k-1)$ and any other element of $G$ enters $\underline{S}^2$ with coefficient distinct from~$2(k-1)$. Together with $S\in \mathcal{S}^*(\mathcal{A})$ and Lemma~\ref{sw}, this implies that $V\in \mathcal{S}^*(\mathcal{A})$. So 
$$V\cap S=\{cb\}\in \mathcal{S}(\mathcal{A})~\eqno(4)$$
by Eq.~(1). Since $S,\{cb\}\in \mathcal{S}^*(\mathcal{A})$, Eq.~(1) implies that $cbS,Scb\in \mathcal{S}^*(\mathcal{A})$. So
$$S_1=(cbS\setminus Scb)\cap S=\{ca\}\in \mathcal{S}(\mathcal{A})~\eqno(5)$$
by Eq.~(1). Now from Eqs.~(1) and~(5) it follows that
$$S_2=(cbS\setminus Scb)\setminus S_1=\{da^{-1}\}\in \mathcal{S}(\mathcal{A})~\eqno(6)$$
Due to Eqs.~(1) and~(5), we obtain $S_1S_1=\{a^2\}\in \mathcal{S}(\mathcal{A})$. Since $a^2$ is a generator of $A_1$, Lemma~\ref{groupring} yields that 
$$\mathcal{A}_{A_1}=\mathbb{Z}A_1.~\eqno(7)$$

Let $k$ be odd. Then $A_1=A$ and $G=\langle A, cb, ca, da^{-1} \rangle$. From Eqs.~(4)-(7) and Lemma~\ref{groupring} it follows that $\mathcal{A}=\mathbb{Z}G$. 

Let $k$ be even. The partition of $G$ into sets
$$\{g\},~g\in U,~La,~Lc,~Ld,~Lcda$$
defines the $S$-ring $\mathcal{B}$ over $G$ such that $\mathcal{B}=\mathbb{Z}U\wr_S \mathbb{Z}(G/L)$. Note that $S=Lc\cup S_U$, where 
$$S_U=b(A\setminus (A_1\cup \{a^{-1}\}))\cup c((A\setminus A_1)\cup \{b\})\cup \{db,dcba^{-1}\}\subseteq U.$$
So $S\in \mathcal{S}^*(\mathcal{B})$ and hence $\mathcal{B}\geq \mathcal{A}$. 

Let us prove that $\mathcal{B}\leq \mathcal{A}$. Observe that $da\in da^{-1}A_1$. So 
$$\{da\}\in \mathcal{S}(\mathcal{A})~\eqno(8)$$ 
by Eqs.~(6)-(7) and Lemma~\ref{basicset}. Eqs.~(4),~(5),~(7),~(8), and Lemma~\ref{groupring} imply that  $U$ is an $\mathcal{A}$-subgroup and
$$\mathcal{A}_U=\mathbb{Z}U=\mathcal{B}_U.~\eqno(9)$$
Since $S\in \mathcal{S}^*(\mathcal{A})$ and $U\in \mathcal{S}^*(\mathcal{A})$, Eq.~(1) implies that 
$$S\setminus U=Lc\in \mathcal{S}^*(\mathcal{A}).~\eqno(10)$$ 
From Eqs. (1),~(5),~(6),~(8), and~(10) it follows that 
$$La=Lc\{ca\},~Ld=Lc\{ca\}\{da^{-1}\},~Lcda=Lc\{da\}\in\mathcal{S}^*(\mathcal{A}).$$
Together with Eq.~(9), this implies that every basic set of $\mathcal{B}$ is an $\mathcal{A}$-set and hence $\mathcal{B}\leq \mathcal{A}$. Thus, $\mathcal{B}=\mathcal{A}$ and we are done.
\end{proof}

\begin{rem}
If $k$ is odd then $\aut(\Gamma)=\aut(\mathbb{Z}G)=G_{\r}$. If $k$ is even then $\aut(\Gamma)=\aut(\mathbb{Z}U\wr_S \mathbb{Z}(G/L))$ is the canonical generalized wreath product of $U_{\r}$ by $(G/L)_{\r}$ (see~\cite[Section~5.3]{EP2} for the definitions).
\end{rem}

\begin{lemm}\label{l3}
In the above notations, $\rkwl(\Gamma)=8k=n$ if $k$ is odd and $\rkwl(\Gamma)=4k+4=\frac{n}{2}+4$ if $k$ is even.
\end{lemm}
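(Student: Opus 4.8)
The plan is to read off the WL-rank directly from Lemma~\ref{l2}, since by the discussion in Section~2 we have $\rkwl(\Gamma)=\rk(\WL(\Gamma))$. So the whole proof reduces to computing the rank of the $S$-ring produced in Lemma~\ref{l2} in each of the two cases.

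First, suppose $k$ is odd. Then Lemma~\ref{l2} gives $\WL(\Gamma)=\mathbb{Z}G$, and the rank of the full group ring $\mathbb{Z}G$ equals $|G|=8k=n$ (its basic sets are the singletons). Hence $\rkwl(\Gamma)=8k=n$, which is exactly the assertion of Theorem~\ref{main} together with Lemma~\ref{l1}.

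Now suppose $k$ is even. Then Lemma~\ref{l2} gives $\WL(\Gamma)=\mathbb{Z}U\wr_S\mathbb{Z}(G/L)$ with $S=U/L$, so I would apply formula~(2): $\rk(\mathbb{Z}U\wr_S\mathbb{Z}(G/L))=\rk(\mathbb{Z}U)+\rk(\mathbb{Z}(G/L))-\rk(\mathbb{Z}S)$. Here $\rk(\mathbb{Z}U)=|U|$, $\rk(\mathbb{Z}(G/L))=|G/L|=|G|/|L|$, and $\rk(\mathbb{Z}S)=|S|=|U/L|=|U|/|L|$. Recall that for $k$ even $L\cong D_k$ so $|L|=2\cdot\frac{k}{2}\cdot 2=2k$ (since $|A_1|=k/2$ and $|L|=|A_1|\cdot 2$; more precisely $L=A_1\rtimes\langle cb\rangle$ with $|A_1|=k/2$, giving $|L|=k$) — I will pin down $|L|$ carefully from the construction: $A_1=\langle a^2\rangle$ has order $k/2$, and $L=A_1\rtimes\langle cb\rangle$ has order $k$. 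Also $|U:L|=4$ was established before Lemma~\ref{l2}, so $|U|=4|L|=4k$ and $|G/L|=8k/k=8$. Substituting: $\rk=4k+8-4=4k+4$. Since $n=8k$, this is $\frac{n}{2}+4$, as claimed.

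I do not expect any genuine obstacle here: the statement is an immediate corollary of Lemma~\ref{l2} plus the elementary facts that $\rk(\mathbb{Z}K)=|K|$ for any group $K$ and the rank formula~(2) for generalized wreath products. The only point requiring a moment's care is bookkeeping of the orders $|L|$, $|U|$, $|G/L|$, $|S|$ in the even case, so that the arithmetic $4k+8-4=4k+4=\frac{n}{2}+4$ comes out correctly; these orders are all determined by the construction preceding Lemma~\ref{l2} (namely $|G|=8k$, $|U:L|=4$, and $L=A_1\rtimes\langle cb\rangle$ with $A_1=\langle a^2\rangle$).
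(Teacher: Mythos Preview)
Your proposal is correct and follows essentially the same argument as the paper: apply Lemma~\ref{l2}, use $\rk(\mathbb{Z}K)=|K|$, and in the even case plug $|L|=k$, $|U|=4k$, $|G/L|=8$, $|U/L|=4$ into formula~(2) to get $4k+8-4=4k+4$. The only blemish is the momentary miscount of $|L|$ (the ``$2k$'' line), which you immediately correct; in a clean write-up simply state $|A_1|=k/2$ and $|L|=k$ from the outset.
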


\begin{proof}
If $k$ is odd then $\WL(\Gamma)=\mathbb{Z}G$ by Lemma~\ref{l2} and hence $\rkwl(\Gamma)=\rk(\mathbb{Z}G)=8k$. Let $k$ be even. Then $\WL(\Gamma)=\mathbb{Z}U\wr_S \mathbb{Z}(G/L)$ by Lemma~\ref{l2}. Since $|L|=k$ and $|U:L|=4$, we have $|U|=4k$ and hence $\rk(\mathbb{Z}U)=4k$. Observe that $|G/L|=8$ and $|S|=|U/L|=4$. So $\rk(\mathbb{Z}(G/L))=8$ and $\rk(\mathbb{Z}S)=4$. Therefore 
$$\rkwl(\Gamma)=\rk(\mathbb{Z}U\wr_S \mathbb{Z}(G/L))=\rk(\mathbb{Z}U)+\rk(\mathbb{Z}(G/L))-\rk(\mathbb{Z}S)=4k+4$$
by Eq.~(2).
\end{proof}

Theorem~\ref{main} follows from Lemma~\ref{l1} and Lemma~\ref{l3}.

\section{Some properties of $\Gamma$}

In this section we collect some properties of the graph $\Gamma$ constructed in the previous section. 

A $k$-regular graph on $n$ vertices is called a \emph{divisible design graph} (\emph{DDG}) with parameters $(n,k,\alpha,\beta,m,l)$ if its vertex set can be partitioned into $m$ classes of size $l$, such that every two distinct vertices from the same class have $\alpha$ common neighbors and every two vertices from different classes have $\beta$ common neighbors. For a divisible design graph, the partition into classes is called a \emph{canonical partition}. The notion of a divisible design graph was introduced in~\cite{HKM} as a generalization of $(v,k,\lambda)$-graphs~\cite{Rud}. For more information on divisible design graphs, we refer the readers to~\cite{HKM,KS}.

A graph is called \emph{integral} if all eigenvalues of its adjacency matrix are integers. The investigations on integral graphs goes back to~\cite{HS}. More information on spectra of graphs and integral graphs can be found in~\cite{BH}.

\begin{lemm}\label{l4}
The graph $\Gamma$ is an integral divisible design graph.
\end{lemm}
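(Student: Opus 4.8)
The plan is to verify the two defining properties of a divisible design graph separately and then, independently, to compute the spectrum.

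First I would establish the DDG structure. From Lemma~\ref{l1} we already know (via Lemma~\ref{deza} and Eq.~(3)) that $\underline{S}^2=2(k+1)e+2(k-1)\underline{X_\alpha}+2\underline{X_\beta}$ with $X_\alpha=A^\#\cup cbA$ and $X_\beta=bA\cup cA\cup d(C\times H)$, so any two distinct vertices have either $2(k-1)$ or $2$ common neighbours according as their ``difference'' lies in $X_\alpha$ or $X_\beta$. To get a DDG I need a subgroup $N\le G$ such that $N^\#\subseteq X_\alpha$ and every nonidentity element outside $N$ lies in $X_\beta$ (then the cosets of $N$ form the canonical partition, with $\alpha=2(k-1)$ on the same coset, $\beta=2$ on different cosets, $m=|G:N|$, $l=|N|$). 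The natural candidate is $N=A\times\langle cb\rangle$ (order $2k$), since $X_\alpha=A^\#\cup cbA$ is exactly $N^\#$: indeed $A^\#\subseteq X_\alpha$ and $cbA=cb\cdot A\subseteq X_\alpha$, and $|N^\#|=2k-1=|X_\alpha|$. So I would check that $N=A\times\langle cb\rangle$ is a subgroup of $G$ (clear, since $A$ is normal and $cb$ commutes with $a$ because $(cb)a(cb)=bab=a^{-1}$... wait, that conjugates, so actually $N=A\rtimes\langle cb\rangle\cong D_{2k}$, not a direct product — either way it is a genuine subgroup of order $2k$), that $N^\#=X_\alpha$, and conclude that $\Gamma$ is a DDG with parameters $(8k,2(k+1),2(k-1),2,4,2k)$.

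Second, for integrality I would compute the eigenvalues of the adjacency matrix, which for a Cayley graph on an abelian-by-small group are governed by the representation theory of $G$; more concretely, since $G=D_{2k}\times C_2\times C_2$, I would use that $\underline{S}$ acts on $\mathbb{C}G$ and its eigenvalues are the values $\chi(\underline{S})/\chi(1)$ evaluated over the irreducible characters $\chi$, or more elementarily decompose along the index-$2k$ abelian-free structure. The cleanest route: the eigenvalues of $\Gamma$ are obtained by evaluating $\underline{S}$ on each irreducible representation of $G$. The one-dimensional representations send $a\mapsto 1$ (since $\langle a\rangle$ is in the commutator-ish part whenever $b$ is nontrivial, but for the purely abelian characters $a\mapsto\zeta$ a $k$-th root of unity, $b,c,d\mapsto\pm1$); for these, $\underline{S}$ evaluates to an explicit integer combination of Gauss-type sums $\sum_{j}\zeta^j$ over $A$ or $A\setminus\{a^{-1}\}$, and I would check that for $\zeta\ne1$ the total is an integer because the $\underline{A}$ and $\underline{A}^\#$ contributions collapse (a sum of all $k$-th roots of unity is $0$) and only $\zeta^{\pm1}$-type terms survive with coefficients arranged so the imaginary parts cancel — this is where Eq.~(3) helps, since $\underline{S}^2$ has the nice closed form, giving $(\text{eigenvalue})^2\in\{2(k+1)\pm\text{small integers}\}$ a square, forcing the eigenvalue itself to be an integer. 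For the two-dimensional irreducibles of $D_{2k}$ (tensored with the four characters of $C_2\times C_2$), $\underline{S}$ acts as an explicit $2\times2$ matrix whose trace and determinant I would compute; again using $b\underline{A}=\underline{A}b$ one finds $\underline{A}$ acts as $0$ on each $2$-dimensional module, killing most terms, and the surviving matrix turns out to have integer trace and determinant a perfect square.

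The main obstacle, and the step I would spend the most care on, is this eigenvalue computation on the $2$-dimensional representations — keeping track of how $b$, $c$, $d$, and the coset representatives $db$ and $dcba^{-1}$ act simultaneously, and verifying that the resulting $2\times2$ matrix has integer eigenvalues rather than merely integer trace. A convenient shortcut that may bypass this: since $\Gamma$ is a DDG with the parameters found above, its adjacency matrix $M$ satisfies $M^2=2(k+1)I+2(k-1)(P-I)+2(J-P)$ where $P$ is the block-diagonal all-ones matrix of the canonical partition and $J$ the all-ones matrix; the eigenvalues of $M$ then satisfy $\theta^2\in\{k^2(8k)\text{-type known quantities}\}$, namely $\theta^2$ takes one of at most three explicit integer values (on the eigenspaces determined by $J$ and $P$), and one checks each is a perfect square. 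I would present the argument in whichever of these two forms comes out shorter, but I expect the DDG-driven one to be cleanest: reduce integrality to showing three specific nonnegative integers are squares, then verify that directly.
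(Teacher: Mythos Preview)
Your proposal is correct and your preferred route---show $X_{2(k-1)}\cup\{e\}=A\cup cbA$ is a subgroup (hence $\Gamma$ is a DDG with parameters $(8k,2(k+1),2(k-1),2,4,2k)$), then read off the eigenvalues from the DDG matrix identity---is exactly what the paper does. The only difference is packaging: the paper cites \cite[Theorem~1.1]{KS} for the subgroup criterion and \cite[Lemma~2.1]{HKM} for the eigenvalue formulas, whereas you derive both by hand; your representation-theoretic alternative is unnecessary and the paper does not use it.
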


\begin{proof}
From~\cite[Theorem~1.1]{KS} it follows that $\Gamma$ is a divisible design graph if and only if, in the notations of Lemma~\ref{deza}, $X_2\cup \{e\}$ or $X_{2(k-1)}\cup \{e\}$ is a subgroup of $G$. Moreover, the canonical partition of $G$ is a partition into the right cosets by this subgroup. Eq.~(3) implies that $X_{2(k-1)}\cup\{e\}=A\cup cbA$. Since $A$ is normal in $G$, $|cb|=2$, and $a^{cb}=a^{-1}$, the set $X_{2(k-1)}\cup\{e\}$ is a subgroup of $G$ isomorphic to $D_{2k}$. Therefore $\Gamma$ is a divisible design graph with parameters $(8k,2(k+1),2(k-1),2,4,2k)$. 

Since $\Gamma$ is a divisible design graph, one can calculate eigenvalues of its adjacency matrix from its parameters by the formulas from~\cite[Lemma~2.1]{HKM}. It turns out that the set of eigenvalues of the adjacency matrix of $\Gamma$ is equal to $\{2(k+1),\pm 2(k-1),\pm 2\}$. This implies that $\Gamma$ is integral.
\end{proof}

Recall that the \emph{$(l\times m)$-grid} is the line graph of the complete bipartite graph $K_{l,m}$ (see~\cite[p.~440]{BKN}).

\begin{lemm}\label{l5}
The graph $\Gamma$ has the same parameters as the $(4 \times 2k)$-grid  but it is not isomorphic to the $(4 \times 2k)$-grid.
\end{lemm}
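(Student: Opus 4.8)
The plan is to prove the two assertions separately: first, that $\Gamma$ shares the parameter list of the $(4\times 2k)$-grid, and second, that it is nevertheless not isomorphic to that grid.

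For the parameter comparison, I would recall that the $(l\times m)$-grid is strongly regular (indeed a Deza graph in the degenerate sense) with $lm$ vertices, valency $(l-1)+(m-1)$, and that two vertices sharing a row or column have one common coordinate line, while two vertices in distinct rows and columns have two. Concretely, for the $(4\times 2k)$-grid one computes: $n = 8k$ vertices, valency $k = 3 + (2k-1) = 2k+2 = 2(k+1)$, and the common-neighbor counts are $\{2k-2,\;2\}$ — adjacent vertices in the same row have $(l-2)=2$ plus $(m-2)=2k-2$... here one must be slightly careful and recall that the grid, viewed as a Deza graph, has parameters $(8k,\,2(k+1),\,2(k-1),\,2)$, matching exactly the parameters of $\Gamma$ established in Lemma~\ref{l1}. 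Moreover, by Lemma~\ref{l4} the grid is also a divisible design graph, and I would check that its DDG parameters $(8k,2(k+1),2(k-1),2,4,2k)$ coincide with those of $\Gamma$. So the first half reduces to a short, routine verification of the grid's parameters against the already-computed parameters of $\Gamma$.

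For the non-isomorphism, the cleanest route is to use the WL-rank (equivalently, the rank of the WL-closure) as an isomorphism invariant. The $(4\times 2k)$-grid is strongly regular, hence by~\cite[Lemma~2.1 (2)]{BPR} its WL-rank is at most $3$. On the other hand, Lemma~\ref{l3} (in the odd case, which is the relevant one since $k\geq 3$ is odd in Theorem~\ref{main}) gives $\rkwl(\Gamma)=8k \geq 24 > 3$. Since WL-rank is invariant under graph isomorphism, $\Gamma$ cannot be isomorphic to the grid. Equivalently and even more simply: $\Gamma$ is a \emph{strictly} Deza graph by Lemma~\ref{l1}, i.e.\ it is not strongly regular, whereas the grid \emph{is} strongly regular; so they are not isomorphic. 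I would present the strictly-Deza-versus-strongly-regular argument as the main one, since it needs no reference to WL-rank and applies uniformly, and perhaps remark on the WL-rank argument as an alternative.

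The main (and really only) obstacle is bookkeeping: one must state the parameters of the grid in the same normalization as in Lemma~\ref{deza} and Lemma~\ref{l1} — that is, identify which of $\alpha,\beta$ is which and confirm $n,k$ agree — and similarly align the DDG parameter tuple $(n,k,\alpha,\beta,m,l)$. There is no conceptual difficulty: once the grid's parameters are written down correctly, equality with $\Gamma$'s parameters is immediate, and non-isomorphism follows instantly from the fact that the grid is strongly regular while $\Gamma$ is strictly Deza (Lemma~\ref{l1}). I would keep the proof to a few lines, citing Lemma~\ref{l1} for the strictly-Deza property and, if a second argument is wanted, Lemma~\ref{l3} together with the WL-rank bound for strongly regular graphs.
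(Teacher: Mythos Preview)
Your parameter comparison is fine, but your non-isomorphism argument rests on a false premise: the $(4\times 2k)$-grid is \emph{not} strongly regular. In the $(l\times m)$-grid, two adjacent vertices have either $l-2$ or $m-2$ common neighbours depending on whether they share a column or a row, while two non-adjacent vertices have $2$. Strong regularity holds only when $l=m$; here $l=4\neq 2k=m$ for all $k\geq 3$, so the common-neighbour count among adjacent pairs genuinely takes the two distinct values $2$ and $2k-2$. Consequently neither your ``strictly Deza vs.\ strongly regular'' argument nor your ``WL-rank~$\leq 3$'' argument is valid.

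The paper avoids this by citing that the grid has WL-rank exactly~$4$ (and automorphism group $\sym(4)\times\sym(2k)$); since $\rkwl(\Gamma)\in\{8k,\,4k+4\}$ by Lemma~\ref{l3}, and both values exceed~$4$ for $k\geq 3$, non-isomorphism follows. Your WL-rank strategy can therefore be rescued, but you must replace the claimed bound ``$\leq 3$'' by the correct value~$4$, and justify it---e.g.\ by observing that the coherent closure of the grid has exactly the four classes \{diagonal, same row, same column, distinct row and column\}.
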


\begin{proof}
Let $\Gamma^{\prime}$ be the graph isomorphic to the $(4\times 2k)$-grid. The graph $\Gamma^{\prime}$ has parameters $(8k,2(k+1),2(k-1),2)$ by~\cite[Construction~4.8]{HKM}. However, due to~\cite[Example~3.2.12]{CP}, we obtain $\rkwl(\Gamma^{\prime})=4$ and $\aut(\Gamma^{\prime})\cong \sym(4) \times \sym(2k)$. So $\Gamma$ is not isomorphic to $\Gamma^{\prime}$.
\end{proof}

The \emph{Weisfeiler-Leman dimension} $\dimwl(\Delta)$ of a graph $\Delta$ is defined to be the smallest positive integer~$m$ for which $\Delta$ is identified by the $m$-dimensional Weisfeiler-Leman algorithm~~\cite[Definition~18.4.2]{Grohe}. If $\dimwl(\Delta)\leq m$ then the isomorphism between $\Delta$ and any other graph can be verified in time $n^{O(m)}$ using the Weisfeiler-Leman algorithm~\cite{WeisL}. The Weisfeiler-Leman dimension of Deza circulant graphs was studied in~\cite{BPR}.

\begin{lemm}\label{l6}
The Weisfeiler-Leman dimension of $\Gamma$ is equal to~$2$.
\end{lemm}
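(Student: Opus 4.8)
The plan is to establish both an upper and a lower bound on $\dimwl(\Gamma)$. For the lower bound, observe that $\Gamma$ is not a complete or empty graph, and more importantly, it is a non-strongly-regular graph (being strictly Deza by Lemma~\ref{l1}), so its WL-rank exceeds $3$; in fact, by Lemma~\ref{l3} the $1$-dimensional Weisfeiler-Leman algorithm already fails to identify $\Gamma$, so $\dimwl(\Gamma)\geq 2$. (One should note that $1$-WL applied to $\cay(G,S)$ stabilizes exactly the coherent configuration of the smallest $S$-ring containing $S$ as a union of basic sets, and since $\rkwl(\Gamma)=n>3$, the $1$-dimensional algorithm cannot produce the discrete configuration, nor is $\Gamma$ identified by it; alternatively, one can simply cite the standard fact that no graph on more than two vertices that is regular is identified by $1$-WL.)

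For the upper bound, the plan is to show $\dimwl(\Gamma)\leq 2$, i.e.\ that $\Gamma$ is identified by the $2$-dimensional Weisfeiler-Leman algorithm. Here I would invoke Lemma~\ref{l2}: for $k$ odd, $\WL(\Gamma)=\mathbb{Z}G$, which means the $2$-WL algorithm (whose output refines that of $1$-WL and in particular computes the coherent closure) already individualizes every vertex, hence distinguishes $\Gamma$ from any non-isomorphic graph. More precisely, since the coherent configuration of $\Gamma$ is the discrete one (rank $n^2$ would be wrong — rather $\WL(\Gamma)=\mathbb{Z}G$ gives a coherent configuration of rank $n$ corresponding to the regular representation, whose basic graphs are perfect matchings), the automorphism group is $G_\r$ acting regularly; a separable/Schurian coherent configuration arising from a regular group is known to be identified already by $2$-WL. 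The cleanest route is to cite the result that a Cayley graph whose WL-closure equals $\mathbb{Z}G$ (equivalently, has WL-rank $n$) has $\dimwl\leq 2$ — this is essentially because the $2$-dimensional algorithm can individualize along the matching structure; combined with $\dimwl\geq 2$ we conclude $\dimwl(\Gamma)=2$. I would phrase this via the general principle: if the coherent configuration of $\Gamma$ is the coherent configuration of a regular group, then $2$-WL identifies $\Gamma$ (see the relevant result in~\cite{Grohe} or~\cite{CP} on WL-dimension of Cayley objects over groups with regular automorphism group).

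The main obstacle is pinning down exactly which cited fact delivers "$\WL$-rank $=n$ implies $\dimwl\leq 2$." The subtle point is that $\rkwl(\Gamma)=n$ does not by itself force the coherent configuration to be discrete — it forces it to have $n$ classes, which for a Cayley graph over $G$ with $\WL(\Gamma)=\mathbb{Z}G$ means the basic relations are the graphs of right translations, so $\aut(\Gamma)=G_\r$ is regular. One then needs the statement that the WL-dimension of a graph with regular automorphism group whose coherent configuration is Schurian (which it is, being that of $G_\r$) is at most $2$; this follows because $2$-WL detects the "thin" (Cayley) structure and the multiplication table of $G$, and any graph giving rise to an isomorphic coherent configuration must come from an isomorphic group with a corresponding connection set, forcing graph isomorphism. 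I would either cite this directly from the literature on identifying Cayley graphs (e.g.\ results of Ponomarenko and collaborators, as in~\cite{CP}) or, if no clean citation is available, give a short argument: run $2$-WL, use that it computes the coherent closure which is $\mathbb{Z}G$, individualize the identity vertex (permitted since $2$-WL followed by one individualization and refinement is subsumed in checking — but cleaner, note the coherent configuration already has all singleton-like matching relations), recover the full group multiplication, and hence recover $S$ up to the group automorphism, which suffices for isomorphism. I expect writing this last argument rigorously, rather than the bound $\dimwl\geq 2$, to be where the real work lies.
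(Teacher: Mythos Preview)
Your lower bound is fine and matches the paper's: $\Gamma$ is regular but not strongly regular, so $\dimwl(\Gamma)\geq 2$ (the paper cites~\cite[Lemma~3.2]{BPR} for exactly this).

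For the upper bound, there are two problems.

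First, a genuine gap: the graph $\Gamma$ in Section~3 is defined for every integer $k\geq 3$, and Lemma~\ref{l6} (with its proof in the paper) covers both parities. You only treat $k$ odd, where $\WL(\Gamma)=\mathbb{Z}G$. For $k$ even Lemma~\ref{l2} gives $\WL(\Gamma)=\mathbb{Z}U\wr_S\mathbb{Z}(G/L)$, which is \emph{not} the thin scheme of a regular group, so your ``regular automorphism group'' argument does not apply there at all. The paper handles this case by invoking~\cite[Theorem~3.4.23]{CP}, which says that a generalized wreath product of separable $S$-rings of this shape is again separable.

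Second, even for $k$ odd your route is the right intuition but you are missing the two citations that make it a proof rather than a sketch. The clean chain, which the paper uses, is: (i)~$\WL(\Gamma)$ is \emph{separable} --- for $\mathbb{Z}G$ this is~\cite[Theorem~2.3.33]{CP}, essentially the fact that the thin coherent configuration of a regular group is separable; (ii)~separability of the coherent closure implies $\dimwl(\Gamma)\leq 2$ by~\cite[Theorem~2.5]{FKV}. Your proposed direct argument (individualize, recover the multiplication table, etc.) is morally what underlies separability of thin schemes, but phrasing it rigorously without the separability machinery --- and in a way that genuinely bounds the WL-dimension rather than giving an isomorphism test with one individualization --- is more delicate than you suggest. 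In particular, ``$2$-WL followed by individualization'' is not the same as ``$2$-WL identifies $\Gamma$''; the FKV result is exactly what bridges separability and identification by $2$-WL.
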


\begin{proof}
The $S$-ring $\WL(\Gamma)$ is separable in the sense of~\cite[Section~4.2]{BPR}. Indeed, if $k$ is odd then $\WL(\Gamma)=\mathbb{Z}G$ by Lemma~\ref{l2} and the required follows from~\cite[Theorem~2.3.33]{CP}. If $k$ is even then $\WL(\Gamma)=\mathbb{Z}U\wr_S \mathbb{Z}(G/L)$ by Lemma~\ref{l2} and the required follows from~\cite[Theorem~3.4.23]{CP}. The separability of $\WL(\Gamma)$ and~\cite[Theorem~2.5]{FKV} imply that $\dimwl(\Gamma)\leq 2$. Since $\Gamma$ is regular but nonstrongly regular, $\dimwl(\Gamma)\neq 1$ by~\cite[Lemma~3.2]{BPR}. Thus, $\dimwl(\Gamma)=2$.
\end{proof}


\begin{thebibliography}{list}

\bibitem{BPR}
\emph{R.~Bildanov, V.~Panshin,~G.~Ryabov}, On WL-rank and WL-dimension of some Deza circulant graphs, submitted to Graphs Combin., arXiv:2012.13898 [math.CO] (2020), 1--21.


\bibitem{BKN}
\emph{A.~Brouwer, A.~Cohen, A.~Neumaier}, Distance-regular graphs, Springer, Heidelberg, 1989.


\bibitem{BH}
\emph{A.~Brouwer, W.~Haemers}, Spectra of graphs, Springer, New York, 2012.


\bibitem{CP}
\emph{G.~Chen, I.~Ponomarenko}, Coherent configurations, Central China Normal University Press, Wuhan (2019).

\bibitem{Deza}
\emph{A.~Deza, M.~Deza}, The ridge graph of the metric polytope and some relatives, Polytopes:
Abstract, convex and computational, T. Bisztriczky et al. (Editors), NATO ASI Series, Kluwer Academic (1994), 359--372.



\bibitem{EFHHH}
\emph{M.~Erickson, S.~Fernando, W.~Haemers, D.~Hardy, J.~Hemmeter}, Deza graphs: a
generalization of strongly regular graphs J. Comb. Designs., \textbf{7} (1999), 359--405.


\bibitem{EP1}
\emph{S.~Evdokimov,~I.~Ponomarenko}, On a family  of Schur rings over a finite cyclic group, St. Petersburg Math. J., \textbf{13}, No.~3 (2002), 441--451. 


\bibitem{EP2}
\emph{S.~Evdokimov,~I.~Ponomarenko}, Schurity of $S$-rings over a cyclic group and generalized wreath
product of permutation groups, St. Petersburg Math. J., \textbf{24}, No.~3 (2013), 431--460. 


\bibitem{FKV}
\emph{F.~Fuhlbr\"uck, J~ K\"obler, O.~Verbitsky}, Identiability of graphs with small color classes by the Weisfeiler-Leman algorithm, in: Proc. $37$th International Symposium on Theoretical Aspects of Computer Science, Dagst\"uhl Publishing, Germany (2020), 43:1--43:18.


\bibitem{GSh}
\emph{S.~Goryainov, L.~Shalaginov}, Cayley-Deza graphs with less than~$60$ vertices, Sib. Elect. Math. Rep., \textbf{11} (2014), 268--310.


\bibitem{Grohe} 
\emph{M.~Gr\"ohe}, Descriptive complexity, canonisation, and definable graph structure theory, Cambridge University Press, Cambridge (2017).


\bibitem{HKM}
\emph{W.~Haemers, H.~Kharaghani, M.~Meulenberg}, Divisible design graphs, J.Combinatorial Theory A, \textbf{118} (2011), 978--992.

\bibitem{HS}
\emph{F.~Harary, A.~Schwenk}, Which graphs have integral spectra? Graphs Combin., \textbf{390} (1974) 45--51.


\bibitem{KS}
\emph{V.~Kabanov,~L.~Shalaginov}, On divisible design Cayley graphs, The Art of Discrete and Applied Mathematics, \textbf{4}, No.~2  (2021), 1--9.
 

\bibitem{Rud}
\emph{A.~Rudvalis}, $(v,k,\lambda)$-graphs and polarities of $(v,k,\lambda)$-designs, Math. Z., \textbf{120} (1971) 224--230.


\bibitem{Ry1}
\emph{G.~Ryabov}, On Schur p-groups of odd order, J. Algebra Appl., \textbf{16}, No.~3, Article ID 1750045 (2017).


\bibitem{Ry2} 
\emph{G.~Ryabov},  On separability of Schur rings over abelian $p$-groups, Algebra Log., \textbf{57}, No.~1 (2018), 49--68.



\bibitem{Schur}
\emph{I.~Schur}, Zur theorie der einfach transitiven Permutationgruppen, S.-B. Preus Akad. Wiss.
Phys.-Math. Kl., \textbf{18}, No.~20 (1933), 598--623.


\bibitem{Wi}
\emph{H.~Wielandt}, Finite permutation groups, Academic Press, New York - London, 1964.



\bibitem{WeisL}
\emph{B.~Weisfeiler,~A.~Leman}, Reduction of a graph to a canonical form and an algebra which appears in the process, NTI, \textbf{2}, No.~9 (1968), 12--16.



\end{thebibliography}
\end{document}